\documentclass[12pt]{amsart}
\usepackage{mathtools}

\usepackage{amssymb} 

\usepackage{graphicx}

\usepackage{graphics}
\usepackage{amsmath}
\usepackage{amsthm}

\usepackage{enumerate}

\textwidth=16.8cm\textheight=22.5cm\parindent=15pt

\oddsidemargin=0mm\evensidemargin=0mm\topmargin=-3mm

\newtheorem{thm}{Theorem}

\newtheorem*{theorem}{Theorem}

\theoremstyle{definition}


\def\<{\langle}

\def\>{\rangle}

\def\eps{\varepsilon}

\def\Det{\mathrm{Det}\,}


\unitlength=1mm

\def\M3{M_3(\bbbc)}

\def\M3r{M_3(\bbbr)}

\def\diag{\mathrm{Diag}}


\def\bbbr{{\mathbb R}}

\def\bbbc{{\mathbb C}}


\newcommand{\R}{\mathbb{R}}



\newcommand*{\be}{\begin{equation}}

\newcommand*{\ee}{\end{equation}}

\newcommand*{\bit}{\begin{itemize}}
\newcommand*{\eit}{\end{itemize}}
\newcommand*{\ben}{\begin{enumerate}}
\newcommand*{\een}{\end{enumerate}}

\newcommand{\tr}{\mathrm{Tr}}
\newcommand{\dt}{\mathrm{Det}}

\newcommand{\norm}[1]{\left\|#1\right\|}

\newcommand*{\inner}[2]{\left<#1,\,#2\right>}

\newcommand*{\ler}[1]{\left(#1\right)}

\newcommand{\ba}{\begin{array}}
\newcommand{\ea}{\end{array}}


\newcommand{\fH}{\mathbb{H}}
\newcommand{\fP}{\mathbb{P}}

\newcommand{\E}{\mathbb{E}_2}
\newcommand{\adj}{\operatorname{adj}}


\begin{document}
\title{Continuous Jordan triple endomorphisms of $\fP_2$}

\author{LAJOS MOLN\'AR}
\address{MTA-DE "Lend\" ulet" Functional Analysis Research Group, Institute of Mathematics\\
         University of Debrecen\\
         H-4010 Debrecen, P.O. Box 12, Hungary}
\email{molnarl@science.unideb.hu}
\urladdr{http://www.math.unideb.hu/\~{}molnarl/}

\author{D\'ANIEL VIROSZTEK}
\address{Institute of Mathematics\\ 
         Budapest University of Technology and Economics\\
         H-1521 Budapest, Hungary} 
\email{virosz@math.bme.hu} 
\urladdr{http://www.math.bme.hu/~virosz}
\thanks{The first author was supported by the "Lend\" ulet" Program (LP2012-46/2012) of the Hungarian Academy of Sciences. The second author was partially supported by the Hungarian Scientific Research Fund (OTKA) Reg. No.  K104206}
\keywords{Positive definite matrices, Jordan triple endomorphisms, Hilbert space effects, sequential endomorphisms.}
\subjclass[2010]{Primary: 15B48. Secondary: 47B49, 15A86, 81Q10.}
\maketitle

\begin{abstract}
We describe the structure of all continuous Jordan triple endomorphisms of the set $\fP_2$ of all positive definite $2\times 2$ matrices thus completing a recent result of ours. We also mention an application concerning sorts of surjective generalized isometries on $\fP_2$ and, as second  application, we complete another former result of ours on the structure of sequential endomorphisms of finite dimensional effect algebras.
\end{abstract}

\section*{}

Recently, we have been very interested in the structure of so-called Jordan triple endomorphisms of the set of all positive definite matrices or, more generally, those of the positive definite cones in operator algebras. These are maps which are morphisms with respect to the operation of the Jordan triple product $(A,B)\mapsto ABA$ which is a well-known operation in ring theory. Our main reason for investigating those maps comes from the fact that they naturally appear in the study of surjective isometries and surjective maps preserving generalized distance measures between positive definite cones. For details see \cite{lm, ML13j, ML15b}.

In the paper \cite{lm} we have proved the following statement which appeared as Theorem 1 there. In what follows we denote by $\mathbb M_n$ the algebra of all $n\times n$ complex matrices and $\mathbb P_n$ stands for the cone of all positive definite matrices in $\mathbb M_n$. When we use the word "continuity" we mean the topology of the operator norm, in other word, spectral norm (or any other norm on the finite dimensional linear space $\mathbb M_n$). The usual trace functional and the determinant are denoted by $\tr$ and $\Det$, respectively, and ${}^{tr}$ stands for the transpose operation.

\begin{theorem}
Assume $n\geq 3$. Let $\phi:{\mathbb P}_n \to {\mathbb P}_n$ be a continuous map which is a Jordan triple endomorphism, i.e., $\phi$ is a continuous map which satisfies
\[
\phi(ABA)=\phi(A)\phi(B)\phi(A), \quad A,B\in {\mathbb P}_n.
\]
Then there exist a unitary matrix $U\in {\mathbb M}_n$, a real number $c$, a set $\{P_1,\ldots,P_n\}$ of mutually orthogonal rank-one projections in ${\mathbb M}_n$, and a set $\{c_1,\ldots,c_n\}$ of real numbers such that $\phi$ is of one of the following forms:
\begin{itemize}
\item[(a1)]
$\phi(A)=(\Det A)^c UAU^*$, \, $A\in {\mathbb P}_n$;
\item[(a2)]
$\phi(A)=(\Det A)^c UA^{-1}U^*$,\, $A\in {\mathbb P}_n$;
\item[(a3)]
$\phi(A)=(\Det A)^c UA^{tr}U^*$,\, $A\in {\mathbb P}_n$;
\item[(a4)]
$\phi(A)=(\Det A)^c U{A^{tr}}^{-1}U^*$,\, $A\in {\mathbb P}_n$;
\item[(a5)]
$\phi(A)=\sum_{j=1}^n (\Det A)^{c_j} P_j$,\, $A\in{\mathbb P}_n$.
\end{itemize}
\end{theorem}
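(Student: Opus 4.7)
\emph{Elementary consequences.} First, setting $A = I$ in the defining identity gives $\phi(B) = \phi(I)\phi(B)\phi(I)$; specializing further to $B = I$ yields $\phi(I)^{3} = \phi(I)$, and the positive-definiteness of $\phi(I)$ then forces $\phi(I) = I$. The choice $B = A^{-1}$ produces $\phi(A^{-1}) = \phi(A)^{-1}$, while $B = I$ gives $\phi(A^{2}) = \phi(A)^{2}$. A short induction yields $\phi(A^{k}) = \phi(A)^{k}$ for $k \in \mathbb{Z}$; uniqueness of positive roots extends this to $k \in \mathbb{Q}$, and continuity delivers $\phi(A^{t}) = \phi(A)^{t}$ for every $t \in \mathbb{R}$.

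\emph{Preservation of commutativity.} Next I would show that $\phi$ preserves commutativity. If $A,B \in \mathbb{P}_n$ commute, then $A^{1/2} B A^{1/2} = AB$, so $\phi(AB) = \phi(A)^{1/2}\phi(B)\phi(A)^{1/2}$. Applying this to the commuting pair $(A^{2}, B^{2})$ and equating with $\phi((AB)^{2}) = \phi(AB)^{2}$ yields $QP^{2}Q = PQ^{2}P$ with $P = \phi(A)^{1/2}$ and $Q = \phi(B)$. Since the left side equals $(PQ)^{*}(PQ)$ and the right equals $(PQ)(PQ)^{*}$, the operator $PQ$ is normal; as $PQ$ is similar (via conjugation by $P^{1/2}$) to the positive-definite matrix $P^{1/2} Q P^{1/2}$, its spectrum is positive real, so $PQ$ is self-adjoint and hence $[P,Q] = 0$. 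Thus $\phi(A)$ and $\phi(B)$ commute.

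\emph{Diagonal analysis.} Let $\mathcal{D}$ denote the cone of positive-definite diagonal matrices. By the previous step $\phi(\mathcal{D})$ is a commutative family, hence simultaneously diagonalized by some orthonormal basis with rank-one projections $P_{1}, \dots, P_{n}$, so $\phi(D) = \sum_{j} \lambda_{j}(D) P_{j}$ for continuous functions $\lambda_{j} : \mathcal{D} \to (0, \infty)$ satisfying $\lambda_{j}(DED) = \lambda_{j}(D)^{2} \lambda_{j}(E)$. Writing $D = \mathrm{diag}(e^{x_{1}}, \dots, e^{x_{n}})$ and $\mu_{j} = \log \circ \lambda_{j} \circ \exp$, this becomes $\mu_{j}(2x + y) = 2\mu_{j}(x) + \mu_{j}(y)$, whose continuous solutions are the $\mathbb{R}$-linear maps, so $\lambda_{j}(D) = \prod_{k} d_{k}^{c_{jk}}$ for real constants $c_{jk}$.

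\emph{Main obstacle: the dichotomy.} The difficult step is to upgrade this diagonal description to the full classification (a1)--(a5). Repeating the diagonal analysis on each unitarily conjugated cone $V \mathcal{D} V^{*}$ yields analogous coefficient arrays $(c_{jk}^{V})$; comparing them as $V$ varies and exploiting continuity in $V$ should produce a rigidity dichotomy. Either every row of $(c_{jk})$ is constant, so that $\lambda_{j}(D) = (\det D)^{c_{j}}$ and the projections $\{P_{j}\}$ are forced to be independent of $V$, yielding (a5); or the diagonal data assembles into a unital Jordan endomorphism of $\mathbb{M}_{n}$ twisted by a power of $\det$, in which case the classical Herstein-type classification of Jordan endomorphisms of $\mathbb{M}_{n}$ for $n \geq 3$ (a congruence composed with one of the identity, transposition, inversion, or their combination) delivers (a1)--(a4). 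The hypothesis $n \geq 3$ is essential in this last rigidity step; the additional flexibility available for $n = 2$ is precisely the gap that the present paper sets out to fill.
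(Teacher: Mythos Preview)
The paper does not actually prove this theorem here; it is quoted verbatim from \cite{lm} as background, and no proof appears in the present article. What the paper \emph{does} reveal about the argument in \cite{lm} is the key reduction (see the reference to \cite[Lemma~6]{lm} in the proof of Theorem~\ref{T:main}): one shows that any continuous Jordan triple endomorphism factors as $\phi=\exp\circ f\circ\log$ for a \emph{linear} commutativity-preserving map $f:\mathbb{H}_n\to\mathbb{H}_n$, and then invokes the known structure theory of such linear maps for $n\geq 3$ to obtain (a1)--(a5). Your Steps~1--3 are correct and cleanly written (the normality argument for commutativity preservation is nice), but they bypass this linearization and go instead through a direct diagonal-cone analysis.

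The genuine gap is Step~4. You have honestly flagged it as the ``main obstacle,'' but what you have written there is a wish list, not an argument: you assert that comparing the coefficient arrays $(c_{jk}^{V})$ as $V$ varies ``should produce a rigidity dichotomy,'' without indicating any mechanism that forces either all rows of $(c_{jk})$ to be constant or the data to assemble into a global Jordan endomorphism of $\mathbb{M}_n$. In particular, you have not explained why the diagonalizing projections $P_j$ attached to $V\mathcal{D}V^*$ vary coherently with $V$, nor why non-constant rows must come from a single unitary conjugation (possibly composed with transpose and/or inverse). This is exactly the place where the $n\geq 3$ hypothesis does real work, and the route taken in \cite{lm} handles it by passing to the linear category, where commutativity-preserving maps on $\mathbb{H}_n$ have a classical classification. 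If you want to avoid that linearization, you would need a substantive new argument here; as written, Step~4 is a plausible-sounding sketch rather than a proof.
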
 

Observe that the converse statement in Theorem is also true meaning that any transformation of any of the forms (a1)-(a5) is necessarily a continuous Jordan triple endomorphism of $\mathbb P_n$.

One may immediately ask why we assume the condition $n\geq 3$, what happens in the case where $n=2$. The fact is that in the proof of Theorem we used such tools which are applicable  only if $n\geq 3$. Of course, we were very interested in the remaining case $n=2$ but unfortunately could not come up with a solution. Therefore, we proposed it as an open problem in our papers \cite{lm} (see Remark 11) and \cite{ML13j} (see Remark 23). 

One may think that when $n=2$, one can simply compute and obtain the solution straightaway. But this is far from being true as it will turn out below.
Indeed, the aim of this paper is to solve that problem and also present a few applications.

Our main result reads as follows.

\begin{thm}\label{T:main}
Let $\phi: \fP_2 \rightarrow \fP_2$ be a continuous Jordan-triple endomorphism. Then we have the following possibilities:
\begin{itemize}
\item[(b1)]
there is a unitary matrix $U\in \mathbb M_2$ and a real number $c$ such that $$\phi(A)=(\Det A)^c UAU^*, \quad A\in \fP_2;$$
\item[(b2)]
there is a unitary matrix $V\in \mathbb M_2$ and a real number $d$ such that $$\phi(A)=(\Det A)^d VA^{-1}V^*, \quad A\in \fP_2;$$
\item[(b3)]
there is a unitary matrix $W\in \mathbb M_2$ and real numbers $c_1,c_2$ such that 
$$
\phi(A)=W\diag [(\Det A)^{c_1}, (\Det A)^{c_2}]W^*, \quad A\in \fP_2.
$$
\end{itemize}
\end{thm}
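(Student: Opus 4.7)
The plan is to convert the Jordan-triple relation into data at the Lie-algebra level via the exponential map and then exploit the rigidity of $2\times 2$ Hermitian matrices to finish.

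First, setting $A=B=I$ gives $\phi(I)=\phi(I)^3$, so $\phi(I)=I$ since $\phi(I)$ is positive definite. With $B=I$ one gets $\phi(A^2)=\phi(A)^2$, and from $\phi(A^{-1}AA^{-1})=\phi(A^{-1})$ one obtains $\phi(A^{-1})=\phi(A)^{-1}$. Iterating through integers, passing to rationals, and using continuity,
\[
\phi(A^t)=\phi(A)^t,\qquad A\in\fP_2,\ t\in\R.
\]
Writing $A=\exp H$ with $H$ a $2\times 2$ Hermitian matrix, this produces a continuous, $\R$-homogeneous map $\psi$ on Hermitians with $\phi(\exp H)=\exp\psi(H)$.

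Next, I would show that $\psi$ preserves commutativity. For commuting $A,B\in\fP_2$, the identity $A^{1/2}BA^{1/2}=B^{1/2}AB^{1/2}$ (both equal $AB$) gives, after applying $\phi$, the equation $XY^2X=YX^2Y$ with $X=\phi(A)^{1/2}$ and $Y=\phi(B)^{1/2}$. A direct $2\times 2$ computation in the spectral basis of $X$ shows that, when $X$ has distinct eigenvalues, this forces $Y$ to be diagonal in the same basis, hence $\phi(A)$ and $\phi(B)$ commute. After absorbing a unitary conjugation into the final outer unitary, I may therefore assume that $\psi$ sends the diagonal Cartan $\mathfrak a$ into itself. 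On $\mathfrak a$ the Jordan-triple identity translates, in log-coordinates, to $\psi(2\mathbf h+\mathbf k)=2\psi(\mathbf h)+\psi(\mathbf k)$, which together with $\R$-homogeneity and continuity forces $\psi|_{\mathfrak a}$ to be $\R$-linear, given by a real matrix $M=\begin{pmatrix}\alpha&\beta\\\gamma&\delta\end{pmatrix}$; thus $\phi(\diag(e^x,e^y))=\diag(e^{\alpha x+\beta y},e^{\gamma x+\delta y})$.

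The key and most delicate step is to pin down $M$ and describe $\phi$ on non-diagonal inputs. Taking $A=\diag(e^x,e^y)$ with $x\neq y$ and $B=I+\varepsilon H$ with $H$ an off-diagonal Hermitian, and matching first order in $\varepsilon$ in $\phi(ABA)=\phi(A)\phi(B)\phi(A)$, forces $\psi(H)$ to obey a scaling relation compatible only with very restricted $M$. Coupled with the $2\times 2$ identity $A^{tr}=(\Det A)\,J^{-1}A^{-1}J$, where $J=\begin{pmatrix}0&1\\-1&0\end{pmatrix}$---which lets the transpose cases (a3)--(a4) of Theorem~1 be rewritten in terms of $A$ or $A^{-1}$---the analysis reduces to three alternatives: $M$ symmetric with $\Det M>0$, giving (b1); $M$ symmetric with $\Det M<0$, giving (b2); and $M$ of rank $1$, giving (b3) (in which $\phi(\fP_2)$ is commutative).

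The main obstacle is precisely this last step: ruling out hybrid behaviours in which $\psi$ acts as one of these three types on some Cartan subalgebras of the Hermitian matrices but as a different type on others. That rigidity must be extracted from explicit $2\times 2$ matrix identities, since the higher-rank projection machinery that drove the proof for $n\ge 3$ is no longer available; this is precisely why the case $n=2$ was left open in \cite{lm}.
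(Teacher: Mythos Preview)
Your outline has the right overall architecture---pass to the Lie-algebra level via $\exp/\log$, then classify the induced map on Hermitians---but it leaves unaddressed precisely the step that carries all the weight in the $n=2$ case, and in one place it relies on a property you have not actually secured.

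\textbf{Linearity of $\psi$ is missing.} You define $\psi$ by $\phi(\exp H)=\exp\psi(H)$ and correctly obtain $\R$-homogeneity from $\phi(A^t)=\phi(A)^t$. But homogeneity together with linearity on each Cartan subalgebra does \emph{not} give global linearity, and you use linearity implicitly when you speak of ``matching first order in $\varepsilon$'' in $\phi(I+\varepsilon H)$: without knowing that $\psi$ is linear (or at least differentiable at $0$ with a linear derivative), there is no first-order term to match. The paper does not try to derive this from scratch; it invokes \cite[Lemma~6]{lm} (see also \cite[Lemma~16]{ML13j}), which guarantees a \emph{linear} commutativity-preserving $f:\fH_2\to\fH_2$ with $\phi=\exp\circ f\circ\log$. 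That lemma is exactly what lets the rest of the argument proceed, and your proposal needs either to cite it or to supply a proof; the perturbation step cannot launch without it.

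\textbf{The ``key step'' is not a step but a restatement of the problem.} Even granting linearity, what you describe as the delicate part---ruling out ``hybrid behaviours'' of $\psi$ across different Cartans and pinning down $M$---is precisely the content of the theorem, and you give no mechanism for it beyond saying it ``must be extracted from explicit $2\times2$ identities.'' The paper's proof does exactly this extraction, and it is substantial: after disposing of the case where $f(I)$ is not scalar (which yields (b3) directly), it proves two claims by explicit computation in the Pauli basis $\{\sigma_0,\sigma_x,\sigma_y,\sigma_z\}$. Claim~1 shows that $\tr f(\cdot)$ vanishes on the traceless subspace $\fH_{2,0}$, using the closed form $e^{(s/2)\sigma_x}e^{t\sigma_y}e^{(s/2)\sigma_x}=\cosh(s)\cosh(t)I+\cosh(t)\sinh(s)\sigma_x+\sinh(t)\sigma_y$ and a linear-independence argument in the parameters $(s,t)$. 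Claim~2 shows that $f|_{\fH_{2,0}}$ is a non-negative multiple of an isometry, by comparing $\tfrac12\tr\phi(e^{(s/2)\sigma_x}e^{t\sigma_y}e^{(s/2)\sigma_x})$ computed two ways and analysing the resulting hyperbolic identities asymptotically as $t\to\infty$. Only then does the $\mathbf{SO}(3)\!\leftrightarrow\!\mathbf{SU}(2)$ correspondence convert $f$ into one of (b1), (b2), (b3). Your perturbative/Cartan outline is a reasonable alternative \emph{strategy}, but as written it stops exactly where the work begins.
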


Before presenting the proof we introduce a few notation and make some useful observations.

In what follows we denote by $\mathbb H_n$ the space of all self-adjoint elements of $\mathbb M_n$. 

We equip $\fH_2$ with the inner product $\inner{X}{Y}=(1/2)\tr XY$. The induced norm is denoted by $\norm{\cdot}.$ The set
\be \label{basis}
\left\{ \sigma_0=I=\left[ \ba{cc} 1 & 0 \\ 0 & 1 \ea\right], \, \sigma_x=\left[ \ba{cc} 0 & 1 \\ 1 & 0 \ea\right], \, \sigma_y=\left[ \ba{cc} 0 & i \\ -i & 0 \ea\right], \, \sigma_z=\left[ \ba{cc} 1 & 0 \\ 0 & -1 \ea\right] \right\}
\ee
is a convenient orthonormal basis in $\fH_2.$ Let $\fH_{2,0}$ denote the traceless subspace of $\fH_2$ (the subspace of all elements in $\fH_2$ with zero trace). 

In the proof of our theorem we shall use the following two observations. We first claim that for $X \in \fH_{2,0},$ the equality $X^2=I$ holds iff $\norm{X}=1.$
Indeed, let us denote the eigenvalues of $X$ by $\lambda$ and $-\lambda$, $\lambda \geq 0$. We have
$$
X^2=I \Leftrightarrow \lambda^2=1 \Leftrightarrow \frac{1}{2} \ler{\lambda^2+(-\lambda)^2}=1 \Leftrightarrow \norm{X}=1
$$
verifying our first calim. 

Next, we  assert that
for any $0\neq X \in \fH_{2,0}$ we have 
$e^X=\ler{\cosh{\norm{X}}} I + \ler{\sinh \norm{X}} ({X}/{\norm{X}})$.
To see this, using $({X}/{\norm{X}})^2=I$, we compute
$$
e^X=e^{\norm{X}\frac{X}{\norm{X}}}=\sum_{k=0}^{\infty} \frac{1}{k!}\norm{X}^k \ler{\frac{X}{\norm{X}}}^k=\sum_{k=0}^{\infty} \frac{1}{(2k)!}\norm{X}^{2k} I + \sum_{k=0}^{\infty} \frac{1}{(2k+1)!}\norm{X}^{2k+1} \frac{X}{\norm{X}}.
$$
This proves our assertion.

Now we turn to the proof of the main result.

\begin{proof}[Proof of Theorem~\ref{T:main}]
Let $\phi: \fP_2 \rightarrow \fP_2$ be a continuous Jordan triple endomorphism. Then, by \cite[Lemma 6]{lm} there exists a commutativity preserving linear transformation $f: \fH_2 \rightarrow \fH_2$ such that
$$
\phi(A)=\exp( f (\log A)), \quad A\in \fP_2.
$$
In fact, similar conclusion holds all for continuous Jordan triple endomorphisms between the positive definite cones of general $C^*$-algebras as it has been shown in \cite[Lemma 16]{ML13j}. By a commutativity preserving linear map we simply mean a transformation which sends commuting elements to commuting elements.

We have two possibilities for $f(I)$: It is either a scalar multiple of the identity or it is not. We divide the argument accordingly.

Assume first that $f(I)$ is not a scalar multiple of the identity. Then up to unitary similarity we may and do assume that $f(I)$ is a diagonal matrix with two different eigenvalues. By the commutativity preserving property of $f$, for every $A\in \fH_2$ we have that $f(A)$ commutes with $f(I)$ and then it follows that $f(A)$ is diagonal, too. Therefore, we have 
linear functionals $\varphi,\psi:\fH_2 \to \R$ such that
\[
f(A)=\left[ \ba{cc} \varphi(A) & 0 \\ 0 & \psi(A) \ea\right], \quad A\in \fH_2 
\]
and hence
\[
\phi(A)=\left[ \ba{cc} e^{\varphi(\log A)} & 0 \\ 0 & e^{\psi(\log A)} \ea\right], \quad A\in \fP_2.
\]
Since $\phi$ is a Jordan triple endomorphism, we deduce easily that
\[ 
\varphi(\log ABA)=2\varphi(\log A)+\varphi(\log B), 
\quad A, B\in \fP_2
\]
and similar equality holds for $\psi$ as well. Since $\varphi$ is a linear functional on $\fH_2$, by Riesz representation theorem we have an element $T\in \fH_2$ such that $\varphi(\cdot)=\inner{\cdot}{T}$. It follows that we have
\[ 
\tr((\log ABA)T)=2\tr((\log A)T)+\tr((\log B)T), 
\quad A, B\in \fP_2.
\]
Following the argument given on p. 2844 in \cite{ML06a} from the displayed equality (2) on, one can verify that $T$ is necessarily a scalar multiple of the identity and that means that $\varphi(A)=c\tr A$, $A\in \fH_2$ holds for some real number $c$. The same observation applies for $\psi$, too, and then we conclude that there are real numbers $c_1,c_2$ such that we have
\[
\phi(A)=\left[ \ba{cc} (\Det A)^{c_1} & 0 \\ 0 & (\Det A)^{c_2} \ea\right], \quad A\in \fP_2,
\]
which gives us (b3).

In the remaining part of the proof we assume that $f(I)$ 
is a scalar multiple of the identity.

Let us define the linear functional $f_0: \fH_2 \rightarrow \R$ by $f_0(\cdot)=\inner{f(\cdot)}{\sigma_0},$ that is, by $f_0(A)=(1/2)\tr f(A)$, $A\in \fH_2$.

The first crucial step in the proof follows.

\smallskip \smallskip

\textbf{Claim 1.}
\textit{The linear functional $f_0$ vanishes on $\fH_{2,0}.$}

The subspace $\fH_{2,0}$ is generated by $\sigma_x, \sigma_y, \sigma_z$.
We show that $f_0(\sigma_x)=f_0(\sigma_y)=0$, the remaining equality $f_0(\sigma_z)=0$ can be verified similarly. In what follows we consider arbitrary positive real parameters $s,t$. Direct calculations show that for all such $s,t$ we have
\begin{equation*}
\begin{gathered}
e^{\frac{s}{2} \sigma_x} e^{t \sigma_y} e^{\frac{s}{2} \sigma_x}
=\ler{\cosh\ler{\frac{s}{2}}I+\sinh\ler{\frac{s}{2}} \sigma_x} \ler{\cosh(t)I+\sinh(t)\sigma_y} \ler{\cosh\ler{\frac{s}{2}}I+\sinh\ler{\frac{s}{2}} \sigma_x}\\
=\cosh(t)\cosh^2 \ler{\frac{s}{2}} I+ 2 \cosh(t) \cosh\ler{\frac{s}{2}} \sinh\ler{\frac{s}{2}} \sigma_x+\cosh^2\ler{\frac{s}{2}} \sinh(t) \sigma_y\\
+\sinh\ler{\frac{s}{2}}\sinh(t)\cosh\ler{\frac{s}{2}} \sigma_x \sigma_y+\cosh\ler{\frac{s}{2}}\sinh(t)\sinh\ler{\frac{s}{2}} \sigma_y \sigma_x\\+ 
\cosh(t) \sinh^2\ler{\frac{s}{2}} \sigma_x^2
+\sinh^2 \ler{\frac{s}{2}} \sinh(t) \sigma_x \sigma_y \sigma_x\\
=\cosh(s)\cosh(t) I +\cosh(t)\sinh(s)\sigma_x+\sinh(t)\sigma_y.
\end{gathered}
\end{equation*}
Here we have used the equalities $\sigma_x \sigma_y+ \sigma_y \sigma_x=0, \, \sigma_x^2=I, \, \sigma_x \sigma_y \sigma_x= -\sigma_y$ and some identities of the hyperbolic functions. \par

Since, by the multiplicativity of the determinant, we have  $\dt\ler{e^{\frac{s}{2} \sigma_x} e^{t \sigma_y} e^{\frac{s}{2} \sigma_x}}=1,$ hence
$$e^{\frac{s}{2} \sigma_x} e^{t \sigma_y} e^{\frac{s}{2} \sigma_x}=e^{rW}$$
holds for some $W \in \fH_{2,0}$ with $\norm{W}=1$ and $r \geq 0$ (observe that $W$ depends on $s,t$). Since $e^{rW}=\cosh(r)I+\sinh(r)W$ we obtain the equality
\[
\cosh(r)I+\sinh(r)W=
\cosh(s)\cosh(t) I +\cosh(t)\sinh(s)\sigma_x+\sinh(t)\sigma_y.
\]
Taking trace we first deduce that 
\be \label{E:M2}
r=\cosh^{-1}\ler{\cosh(s)\cosh(t)}
\ee
and next that 
\[
\sinh(r)W=
\cosh(t)\sinh(s)\sigma_x+\sinh(t)\sigma_y.
\]
Clearly, due to $s,t>0$, the possibility $r=0$ is ruled out and hence we infer that
\begin{equation} \label{E:M1}
W=\frac{1}{\sinh(r)}\ler{\cosh(t)\sinh(s)\sigma_x+\sinh(t)\sigma_y}=\frac{\cosh(t)\sinh(s)\sigma_x+\sinh(t)\sigma_y}{\sqrt{\cosh^2(s)\cosh^2(t)-1}}.
\end{equation}
Now, on the one hand, we compute
\be \label{egyr}
\dt\ler{\phi\ler{e^{\frac{s}{2} \sigma_x} e^{t \sigma_y} e^{\frac{s}{2} \sigma_x}}}=\dt\ler{e^{f \ler{\log \ler{e^{\frac{s}{2} \sigma_x} e^{t \sigma_y} e^{\frac{s}{2} \sigma_x}}}}}
=e^{\tr f \ler{\log \ler{e^{\frac{s}{2} \sigma_x} e^{t \sigma_y} e^{\frac{s}{2} \sigma_x}}}}=e^{2 f_0(r W)}.
\ee
On the other hand, since $\phi$ is a Jordan triple endomorphism, the quantity (\ref{egyr}) is equal to
\begin{equation} \label{masr}
\begin{gathered}
\dt\ler{\phi\ler{e^{\frac{s}{2} \sigma_x}}\phi\ler{e^{t \sigma_y}} \phi\ler{e^{\frac{s}{2} \sigma_x}}}=
\dt\ler{e^{\frac{s}{2} f (\sigma_x)} e^{t f (\sigma_y)} e^{\frac{s}{2} f (\sigma_x)}}\\
=e^{\frac{s}{2} \tr f (\sigma_x)}e^{t \tr f (\sigma_y)}e^{\frac{s}{2} \tr f (\sigma_x)}=e^{2\ler{s f_0(\sigma_x)+t f_0(\sigma_y)}}.
\end{gathered}
\end{equation}
Let us introduce the auxiliary function 
$$
N(s,t)=\frac{\cosh^{-1}\ler{\cosh(s)\cosh(t)}}{\sqrt{\cosh^2(s)\cosh^2(t)-1}}, \quad 0<s,t\in\R.
$$
By \eqref{E:M2}, \eqref{E:M1}, \eqref{egyr}, \eqref{masr} we have
$$
s f_0(\sigma_x)+t f_0(\sigma_y)=f_0(rW)=N(s,t)\cosh(t)\sinh(s)f_0(\sigma_x)+N(s,t)\sinh(t)f_0(\sigma_y)
$$
for all $0<s,t \in \R.$ It is not difficult to check that the two-variable functions
$g(s,t)=N(s,t)\cosh(t)\sinh(s)-s$ and $h(s,t)=N(s,t)\sinh(t)-t$ are linearly independent. Indeed, one can see that the determinant of the matrix
$$
\left[ \ba{cc} g(1,1) & h(1,1) \\ g(2,2) & h(2,2) \ea\right]
$$
is nonzero (its value is close to -0.5) which implies the desired linear independence.
It then follows that $f_0(\sigma_x)=f_0(\sigma_y)=0$ and we obtain Claim 1.

\smallskip \smallskip

As a consequence we infer that the subspace $f(\fH_{2,0})$ is orthogonal to $\sigma_0=I$ meaning that it consists of traceless matrices,
$f(\fH_{2,0}) \subset \fH_{2,0}$. Since $f(I)$ is  a scalar multiple of the identity, we also have $f(\fH_{2,0}^\bot) \subset \fH_{2,0}^\bot$. We will use these facts in the second crucial step of the proof which follows.

\smallskip \smallskip

\textbf{Claim 2.}
\textit{The restriction of $f$ to $\fH_{2,0}$ is a non-negative scalar multiple of an isometry.}

To see this, it is sufficient to show that $f(\sigma_x), f(\sigma_y), f(\sigma_z)$ are mutually orthogonal and of the same norm. Clearly, we are done if we verify this for any two elements of the collection $f(\sigma_x), f(\sigma_y), f(\sigma_z)$. We shall consider, for example, $f(\sigma_x)$ and $f(\sigma_y)$. Recalling that $f(W)$ is traceless, in the case where $f(W)\neq 0$, we compute
\[
\begin{gathered}
l(s,t):=\frac{1}{2} \tr \phi\ler{e^{\frac{s}{2} \sigma_x} e^{t \sigma_y} e^{\frac{s}{2} \sigma_x}}= \frac{1}{2} \tr \ler{e^{f \ler{\log \ler{e^{\frac{s}{2} \sigma_x} e^{t \sigma_y} e^{\frac{s}{2} \sigma_x}}}}}=\frac{1}{2} \tr e^{r f(W)}\\
= \frac{1}{2} \tr \ler{\cosh\ler{r \norm{f(W)}}I+\sinh\ler{r \norm{f(W)}}\frac{f(W)}{\norm{f(W)}}}=\cosh\ler{r \norm{f(W)}}.
\end{gathered}
\]
If $f(W)=0$, then we again have $l(s,t)=\cosh\ler{r \norm{f(W)}}$ and, by \eqref{E:M1}, we can further compute
\begin{equation} \label{bla1}
\begin{gathered}
l(s,t)=\cosh\ler{\norm{f(W)} \cosh^{-1}\ler{\cosh(s)\cosh(t)}}
\\=\cosh \Biggl(\frac{\cosh^{-1}\ler{\cosh(s)\cosh(t)}}{\sqrt{\cosh^2(s)\cosh^2(t)-1}} 
\times \sqrt{\sinh^2(s)\cosh^2(t)\norm{f (\sigma_x)}^2}
\\ \overline{+\inner{f (\sigma_x)}{f (\sigma_y)} 2 \sinh(s)\sinh(t)\cosh(t)+\sinh^2(t) \norm{f (\sigma_y)}^2} \Biggr)
\\=\cosh\Biggl(\frac{\cosh^{-1}\ler{\cosh(s)\cosh(t)}}{\sqrt{\cosh^2(s)\cosh^2(t)-1}} \times \sqrt{\ler{\cosh^2(s)\cosh^2(t)-\cosh^2(t)}\norm{f (\sigma_x)}^2}
\\ 
\overline{+\inner{f (\sigma_x)}{f (\sigma_y)} 2 \sinh(s)\sinh(t)\cosh(t)+\ler{\cosh^2(t)-1} \norm{f ( \sigma_y)}^2} \Biggr).
\end{gathered}
\end{equation}
Since $\phi$ is a Jordan triple endomorphism,
(\ref{bla1}) is equal to
\[
\begin{gathered}
m(s,t):=\frac{1}{2} \tr \ler{\phi\ler{e^{\frac{s}{2} \sigma_x}}\phi\ler{e^{t \sigma_y}} \phi\ler{e^{\frac{s}{2} \sigma_x}}}=
\frac{1}{2} \tr \ler{e^{\frac{s}{2} f (\sigma_x)} e^{t f( \sigma_y)} e^{\frac{s}{2} f (\sigma_x)}}
=\frac{1}{2} \tr \ler{e^{s f (\sigma_x)} e^{t f (\sigma_y)}}.
\end{gathered}
\]
Assume $f(\sigma_x), f(\sigma_y)\neq 0$ and denote $X=f(\sigma_x)/\norm{f(\sigma_x)}$ and $Y=f(\sigma_y)/\norm{f(\sigma_y)}$. Then, since $f(\sigma_x), f(\sigma_y)$ are traceless, we can continue
\begin{equation}\label{bla2}
\begin{gathered}
m(s,t)
=\frac{1}{2} \tr \ler{\cosh\ler{s \norm{f (\sigma_x)}} I+\sinh\ler{s \norm{f (\sigma_x)}}\frac{f(\sigma_x)}{\norm{f(\sigma_x)}}}\\ \times \ler{\cosh\ler{t \norm{f( \sigma_y)}} I+\sinh\ler{t \norm{f (\sigma_y)}}\frac{f(\sigma_y)}{\norm{f(\sigma_y)}}}\\
=\cosh\ler{s \norm{f (\sigma_x)}}\cosh\ler{t \norm{f( \sigma_y)}}+\inner{X}{Y}\sinh\ler{s \norm{f (\sigma_x)}}\sinh\ler{t \norm{f (\sigma_y)}}.
\end{gathered}
\end{equation}
We show that $\norm{f(\sigma_x)}=\norm{f(\sigma_y)}.$
To this, set $\alpha:=\norm{f (\sigma_x)}, \, \beta:=\norm{f (\sigma_y)}, \, \gamma:=\inner{X}{Y}.$
It is easy to check that
$$
\lim_{t \to \infty} \frac{1}{t}\cosh^{-1}\ler{\cosh^{2}(t)}=2
$$
and
$$
\lim_{t \to \infty}
\sqrt{\frac{\ler{\cosh^4(t)-\cosh^2(t)}\alpha^2+ 2 \alpha \beta \gamma \sinh^2(t)\cosh(t)+\ler{\cosh^2(t)-1} \beta^2}{\cosh^4(t)-1}}=\alpha.
$$
From these we get that for every $0 < \eps (<2)$ there exists some $0<T_\eps$ such that $l(t,t)\geq \cosh\ler{(2-\eps)\alpha t}$ holds for $t>T_\eps.$
On the other hand, it is easy to see that
$$
m(t,t)=\frac{1}{4}e^{(\alpha+\beta)t}\ler{1+\gamma+o(1)}.
$$
Therefore, the inequality
$$
m(t,t)=l(t,t)\geq \cosh\ler{(2-\eps)\alpha t}
$$
is equivalent to
\be \label{csil}
\frac{1}{4}\ler{1+\gamma+o(1)} \geq \frac{1}{2}\ler{e^{((1-\eps)\alpha-\beta) t}+e^{-((3-\eps)\alpha+\beta) t}}
\ee
Taking the limit ${t\to \infty}$ in (\ref{csil}) we infer that $\beta \geq (1-\eps)\alpha.$ This is true for any $0<\eps<2,$ hence letting $\epsilon \to 0$ we obtain $\beta\geq\alpha,$ that is, $\norm{f (\sigma_y)} \geq \norm{f (\sigma_x)}.$ By changing the roles of $\sigma_x$ and $\sigma_y$ we get the desired equality $\norm{f (\sigma_y)} = \norm{f (\sigma_x)}$.
Having this in mind, it is clear that the function $m(\cdot, \cdot)$ is symmetric in the sense that we have $m(s,t)=m(t,s)$ for all $0<s,t \in \R$, see (\ref{bla2}).
It follows that $l(\cdot, \cdot)$ is also symmetric which can happen only when $\inner{f (\sigma_x)}{f (\sigma_y)}=0$, see (\ref{bla1}). Therefore, we have $\| f(\sigma_x)\|=\|f(\sigma_y)\|$,  $\inner{f (\sigma_x)}{f (\sigma_y)}=0$ and we are done in the case where $f(\sigma_x), f(\sigma_y)\neq 0$.

Assume now that $f(\sigma_x)=0, f(\sigma_y)\neq 0$.
By \eqref{bla1}, \eqref{bla2} we have
$$
\cosh\left(\frac{\cosh^{-1}\ler{\cosh(s)\cosh(t)}}{\sqrt{\cosh^2(s)\cosh^2(t)-1}} \sqrt{\ler{\cosh^2(t)-1} \norm{f ( \sigma_y)}^2} \right)=
\cosh\ler{t \norm{f (\sigma_y)}}.
$$
It follows that
$$
\frac{\cosh^{-1}\ler{\cosh^2(t)}}{t}
\sqrt{\frac{\ler{\cosh^2(t)-1}\norm{f ( \sigma_y)}^2}{\cosh^4(t)-1} } =
\norm{f (\sigma_y)}.
$$
Letting $t$ tend to infinity, we obtain $f(\sigma_y)=0$, a contradiction. 

Assume $f(\sigma_x)\neq 0, f(\sigma_y)= 0$.
Again, by \eqref{bla1}, \eqref{bla2} we have
$$
\cosh\left(\frac{\cosh^{-1}\ler{\cosh(s)\cosh(t)}}{\sqrt{\cosh^2(s)\cosh^2(t)-1}} \sqrt{\ler{\cosh^2(s)\cosh^2(t)-\cosh^2(t)} \norm{f ( \sigma_x)}^2} \right)=
\cosh\ler{s \norm{f (\sigma_x)}}.
$$
It follows that
$$
\frac{\cosh^{-1}\ler{\cosh^2(t)}}{t}
\sqrt{\frac{\ler{\cosh^4(t)-\cosh^2(t)}\norm{f ( \sigma_x)}^2}{\cosh^4(t)-1} } =
\norm{f (\sigma_x)}.
$$
Letting $t$ tend to infinity, we deduce $2\norm{f(\sigma_x)}=\norm{f(\sigma_x)}$, i.e., $\norm{f(\sigma_x)}=0$, a contradiction again. So it remains only the possibility $f(\sigma_x)=f(\sigma_y)= 0$ and this proves Claim 2.

\smallskip \smallskip

To complete the proof of our theorem,
let us see what happens when the restriction of $f$ onto $\fH_{2,0}$ is zero. We have $f(I)=(2c)I$ with some real number $c$. Then $f(A)=c(\tr A) I$, $A\in \fH_2$ and we obtain $\phi(A)=(\Det A)^c I$, $A\in \fP_2$. This means that $\phi$ is of the form (b3).

Now assume that the restriction of $f$ onto $\fH_{2,0}$ is a positive scalar multiple of an isometry.
It follows that in the orthonormal basis (\ref{basis}), the transformation $f$ has the block-matrix form
$$
f=p \left[\ba{cc} v & 0 \\ 0 & M \ea \right],
$$
where $p$ is a positive real number, $v$ is a real number and $M$ is a $3 \times 3$ orthogonal matrix.

If $M \in \mathbf{SO}(3)$, then
$$f=p \left[\ba{cc} 1+2c & 0 \\ 0 & R \ea \right]$$ for some $c \in \R$ and $R \in \mathbf{SO}(3).$ 
Similarly,
if $-M \in \mathbf{SO}(3)$, then
$$f=p \left[\ba{cc} -1+2c & 0 \\ 0 & -R \ea \right]$$ for some $c \in \R$ and $R \in \mathbf{SO}(3).$ 

For any $R \in \mathbf{SO}(3)$ there exists a $U \in \mathbf{SU}(2)$ such that the matrix of the transformation $A \mapsto UAU^*$ is
$$\left[\ba{cc} 1 & 0 \\ 0 & R \ea \right],$$ see \cite[Proposition VII.5.7.]{sim}.
Therefore, in the case where $M \in \mathbf{SO}(3)$ we
have
$$
\begin{gathered}
\phi(A)= \exp (f (\log(A)))=
\exp (f(\log A-(\tr (\log A)/2)I))+f((\tr (\log A)/2)I))\\=
\exp(pU(\log A-(\tr (\log A)/2)I)U^*)\exp(p(1+2c)\tr (\log A)/2)\\=
\exp(pU(\log A)U^*)\exp((pc)\tr (\log A))
=(\dt A)^{p c} U A^p U^*.
\end{gathered}
$$
Since $\phi(ABA)=\phi(A)\phi(B)\phi(A)$, we infer $(ABA)^p=A^pB^pA^p$, $A,B\in \fP_2$ which holds only if $p=1$. Consequently, we have 
$\phi(A)=(\dt A)^{c} U A U^*$, $A\in \fP_2$. This means that $\phi$ is of the form (b1).
Similarly, in the case where
$-M \in \mathbf{SO}(3)$ one can conclude 
$\phi(A)=(\dt A)^{c} U A^{-1} U^*$, $A\in \fP_2$, i.e, 
$\phi$ is of the form (b2). The proof of the theorem is complete.
\end{proof}

One can notice that in Theorem describing the structure of continuous Jordan triple endomorphisms of $\fP_n$, in the case where $n\geq 3$ the transpose operation and its composition with the inverse operation also appear and one may ask why it is not so in the case where $n=2$. There is no contradiction here, it is easy to see that in fact those two possibilities do appear in Theorem~\ref{T:main} in a hidden way. Indeed, when $n=2$, the transpose operation can be written in the form (a2) above. Namely, for the unitary matrix
\[
U=
\left[ \ba{cc} 0 & 1 \\ -1 & 0 \ea\right]
\]
we have $A^{tr}=(\Det A) U A^{-1} U^*$ for all $A\in \fP_2$.

The following structural result 
concerning the continuous Jordan triple automorphisms of $\fP_2$ follows from the proof of Theorem~\ref{T:main}. 

\begin{thm}\label{T:M2}
 If $\phi: \fP_2 \rightarrow \fP_2$ is a continuous Jordan triple automorphism, then $\phi$ is of one of the following two forms:
 \begin{itemize}
  \item[(c1)]  there is a real number $c\neq -1/2$ and $U \in \mathbf{SU}(2)$ such that
  $$\phi(A)=(\dt A)^c U A U^*, \quad A\in \fP_2;$$ 
  \item[(c2)] there is a real number $d\neq 1/2$ and $V \in \mathbf{SU}(2)$ such that 
  $$\phi(A)=(\dt A)^d V A^{-1} V^*, \quad A\in \fP_2.$$
 \end{itemize}
\end{thm}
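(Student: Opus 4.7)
The plan is to combine Theorem~\ref{T:main}, which already lists the only three shapes (b1), (b2), (b3) that a continuous Jordan triple endomorphism of $\fP_2$ can take, with a direct bijectivity test for each shape. Since a Jordan triple automorphism is by definition a bijective Jordan triple endomorphism, my whole task reduces to discarding the members of (b1)--(b3) that fail to be bijections and rewriting the survivors in the canonical form claimed by Theorem~\ref{T:M2}.

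Shape (b3) is eliminated immediately: if $\phi(A)=W\diag[(\dt A)^{c_1},(\dt A)^{c_2}]W^*$, then $\phi(A)$ depends on $A$ only through the single scalar $\dt A\in(0,\infty)$, so $\phi(\fP_2)$ is contained in a one-parameter curve and cannot exhaust $\fP_2$ (and $\phi$ is not injective either, since many $A$ share the same determinant).

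For (b1), $\phi(A)=(\dt A)^c UAU^*$, the multiplicativity of the determinant gives $\dt\phi(A)=(\dt A)^{2c+1}$. If $c=-1/2$ then $\dt\phi\equiv 1$, so $\phi$ misses every $B\in\fP_2$ with $\dt B\ne 1$. If $c\ne -1/2$, I would solve $\phi(A)=B$ explicitly: the determinant equation yields $\dt A=(\dt B)^{1/(2c+1)}$, and substituting back gives the unique preimage $A=(\dt B)^{-c/(2c+1)}U^*BU$; hence $\phi$ is bijective. The parallel computation for (b2) produces $\dt\phi(A)=(\dt A)^{2d-1}$, so precisely $d=1/2$ is excluded, and for every other $d$ the explicit inverse $A=(\dt B)^{d/(2d-1)}V^*B^{-1}V$ shows $\phi$ is bijective. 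Finally, every unitary $U\in\mathbb M_2$ can be written as $e^{i\theta}U_0$ with $U_0\in\mathbf{SU}(2)$, and the scalar phase $e^{i\theta}$ cancels in the conjugation $A\mapsto UAU^*$, so the $U$ and $V$ appearing in (b1), (b2) may be chosen in $\mathbf{SU}(2)$, giving (c1) and (c2).

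No step is a genuine obstacle: the main point to get right is that the exclusion of the single values $c=-1/2$ and $d=1/2$ is tight, which is enforced by the clean identities $\dt\phi(A)=(\dt A)^{2c+1}$ and $\dt\phi(A)=(\dt A)^{2d-1}$. At the critical value the determinant of $\phi$ collapses to the constant $1$, which blocks surjectivity, while at every other value the explicit formulas above invert $\phi$ unambiguously.
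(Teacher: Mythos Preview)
Your argument is correct and is precisely the natural way to fill in the paper's one-line claim that Theorem~\ref{T:M2} ``follows from the proof of Theorem~\ref{T:main}'': classify via (b1)--(b3), discard (b3) since its range is at most a one-parameter family, and use the determinant identities $\dt\phi(A)=(\dt A)^{2c+1}$ and $(\dt A)^{2d-1}$ to isolate the excluded values $c=-1/2$, $d=1/2$, with explicit inverses in all other cases. The only cosmetic difference is that in the paper's proof of Theorem~\ref{T:main} the unitary is already produced in $\mathbf{SU}(2)$ via the $\mathbf{SO}(3)$--$\mathbf{SU}(2)$ correspondence, whereas you obtain $\mathbf{SU}(2)$ afterwards by stripping off the scalar phase---either way is fine.
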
 

The result above has the following immediate consequence.
In the case where $n\geq 3$, in \cite[Theorem 1]{ML15b} we obtained a general result describing the possible structure of surjective maps on $\fP_n$ which preserve a generalized distance measure of a certain quite general kind. It is easy to see that, following the proof of \cite[Theorem 1]{ML15b} and applying Theorem~\ref{T:M2}, the result in \cite{ML15b} remains valid also in the case where $n=2$.

\smallskip

In the rest of the paper
we present an application of Theorem~\ref{T:main} for the description of so-called sequential endomorphisms of effect algebras.

Effects play an important role in certain parts of quantum mechanics, for instance, in the quantum theory of measurement \cite{BusLahMit91}. Mathematically, effects are represented by positive semi-definite Hilbert space operators which are bounded (in the natural order $\leq $ among self-adjoint operators) by the identity. The set of all Hilbert space effects are called the Hilbert space effect algebra (although it is clearly not an algebra in the classical algebraic sense).
In \cite{GudNag01} Gudder and Nagy introduced the operation $\circ$ called sequential product on effects which has an important physical a meaning and which is closely related the Jordan triple product. Namely, they defined 
\[
A\circ B=A^{1/2} B A^{1/2}
\]
for arbitrary Hilbert space effects $A,B$. The corresponding endomorphism, i.e., maps $\phi$ on Hilbert space effects
which satisfy
\[
\phi(A\circ B)=\phi(A)\circ \phi(B)
\]
for all pairs $A,B$ of effects are called sequential endomorphisms.
In the literature one can find results related to sequential automorphisms or isomorphisms (bijective sequential endomorphisms). For example,
Gudder and Greechie proved in
\cite[Theorem 1]{GudGre02} that, supposing the dimension of the underlying Hilbert space is at least 3, the sequential automorphisms of the Hilbert space effect algebra are exactly the transformations $\phi$ which are of the form
$\phi: A\mapsto UAU^*$,
where $U$ is either a unitary or an antiunitary operator on the underlying Hilbert space. As a byproduct of one of our results concerning certain preserver transformations on Hilbert space effects, in \cite[Corollary 7]{ML03d}
we obtained that the latter result holds also in the 2-dimensional case.
Afterwards, in \cite{ML03g} we substantially generalized the previous results and described the structure of sequential isomorphisms between von Neumann algebra effects (i.e., between sets of effects on Hilbert spaces belonging to given von Neumann algebras).

In the paper \cite{ML12c} we studied sequential endomorphisms of effect algebras over finite dimensional Hilbert spaces of dimension at least 3. Anybody can easily be convinced that the problem of describing non-bijective morphisms is usually much harder than that of describing bijective ones. In \cite[Theorem 1]{ML12c} we managed to give the precise description of all continuous sequential endomorphisms assuming the dimension is at least 3.
However, the 2-dimensional case remained unresolved and in  \cite[Remark 6]{ML12c} we proposed it as an open problem. Now, using the main result of the present paper we can present a solution of the problem. 

For any positive integer $n$ denote by $\mathbb E_n$ the set of all positive semi-definite $n\times n$ matrices $A$ which satisfy $A\leq I$ (recall that in the natural order $\leq$ on self-adjoint matrices we have $A\leq B$ iff $B-A$ is positive semi-definite).

\begin{thm}\label{T:M1}
Assume $\phi: \E \to \E$ is a continuous sequential endomorphism. Then we have the following four possibilities:

\noindent
\begin{itemize}
\item[(d1)]
there exists a unitary $U\in \mathbb M_2$ and a non-negative real number $c$ such that
\begin{equation*}\label{F:4}
\phi(A)=(\det A)^c UAU^*, \quad A\in \E;
\end{equation*}
\item[(d2)]
there exists a unitary $V\in \mathbb M_2$ such that
\begin{equation*}\label{F:5}
\phi(A)=V(\adj A )V^*, \quad A\in \E;
\end{equation*}
\item[(d3)]
there exists a unitary $V\in \mathbb M_2$ and a real number $d>1$ such that
\begin{equation*}\label{F:6}
\phi(A)=
\left\{
  \begin{array}{ll}
    (\det A)^d VA^{-1}V^*, & \hbox{if $A\in \E$ is invertible;} \\
    0, & \hbox{otherwise;}
  \end{array}
\right.
\end{equation*}
\item[(d4)]
there exists a unitary $W\in \mathbb M_2$ and non-negative real numbers $c_1, c_2$ such that
\begin{equation*}\label{F:7}
\phi(A)= W\diag [(\Det A)^{c_1}, (\Det A)^{c_2}]W^*, \quad A\in \E.
\end{equation*}
\end{itemize}
Here, we mean $0^0=1$.
\end{thm}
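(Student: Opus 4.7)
The strategy is to reduce Theorem \ref{T:M1} to Theorem \ref{T:main} by extending the sequential endomorphism $\phi$ on $\mathbb E_2$ to a continuous Jordan triple endomorphism on all of $\fP_2$. A first key observation is that $A\circ A=A^2$ for $A\in\mathbb E_2$ gives $\phi(A^2)=\phi(A)^2$, and combined with $A^2\circ B=ABA$ this yields
\[
\phi(ABA)=\phi(A^2\circ B)=\phi(A^2)\circ\phi(B)=\phi(A)^2\circ\phi(B)=\phi(A)\phi(B)\phi(A)
\]
whenever $A,B,ABA\in\mathbb E_2$; so $\phi$ already satisfies the Jordan triple identity on $\mathbb E_2$. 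Moreover $P:=\phi(I)$ is a projection, since $\phi(I)=\phi(I)^2$. The degenerate possibilities $P=0$ (giving $\phi\equiv 0$) and $\rank P=1$ (giving $\phi(A)=\lambda(A)P$ for a continuous $\circ$-multiplicative scalar functional $\lambda$, necessarily of the form $(\Det A)^c$) are handled directly as boundary cases, and the main work is for $P=I$.

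Assuming $\phi(I)=I$, I note that $\{\phi(\epsilon I):\epsilon\in(0,1]\}$ is a commuting one-parameter semigroup: from $\epsilon_1 I\circ\epsilon_2 I=\epsilon_1\epsilon_2 I=\epsilon_2 I\circ\epsilon_1 I$ we get $\phi(\epsilon_1 I)\circ\phi(\epsilon_2 I)=\phi(\epsilon_2 I)\circ\phi(\epsilon_1 I)$, and $\circ$-commutativity of positive operators coincides with ordinary commutativity. Simultaneous diagonalization together with continuity then yields $\phi(\epsilon I)=W\diag[\epsilon^a,\epsilon^b]W^*$ for some unitary $W$ and $a,b\ge 0$, so in particular $\phi(\epsilon I)$ is invertible for each $\epsilon>0$. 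The same reasoning applied to $\epsilon I\circ A=A\circ\epsilon I$ shows $\phi(\epsilon I)$ commutes with $\phi(A)$ for every $A\in\mathbb E_2$. These two commutativity facts allow me to extend $\phi$ to $\fP_2$ by setting
\[
\tilde\phi(A):=\phi(\epsilon I)^{-1/2}\phi(\epsilon A)\phi(\epsilon I)^{-1/2},\qquad A\in\fP_2,
\]
for any $\epsilon>0$ small enough that $\epsilon A\in\mathbb E_2$. Using $\phi(\epsilon A)=\phi(\epsilon I)\circ\phi(A)$ on $\mathbb E_2$ shows $\tilde\phi|_{\mathbb E_2}=\phi$; independence of $\epsilon$ follows from the semigroup structure; continuity is inherited from $\phi$; and rearranging the $\phi(\epsilon I)^{\pm 1/2}$ factors symmetrically (via the two commutativity facts), together with $\phi(\epsilon^3 I)=\phi(\epsilon I)^3$ and the Jordan triple identity for $\phi$, yields $\tilde\phi(ABA)=\tilde\phi(A)\tilde\phi(B)\tilde\phi(A)$ for $A,B\in\fP_2$ upon choosing a common $\epsilon$ with $\epsilon A,\epsilon B,\epsilon^3 ABA\in\mathbb E_2$.

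Theorem \ref{T:main} then identifies $\tilde\phi$ with one of the forms (b1), (b2) or (b3), and it only remains to determine which parameter ranges make $\phi=\tilde\phi|_{\mathbb E_2}$ take values in $\mathbb E_2$, i.e.\ satisfy $\phi(A)\le I$. In (b1), $\tilde\phi(A)=(\Det A)^c UAU^*$ with $\Det A\in[0,1]$ forces $c\ge 0$, producing (d1). In (b2), $\tilde\phi(A)=(\Det A)^d VA^{-1}V^*$; a direct eigenvalue analysis shows that $\phi\le I$ requires $d\ge 1$, where $d=1$ gives $V\adj(A)V^*$ (extending continuously to all of $\mathbb E_2$ and yielding (d2)), and $d>1$ gives $\phi(A)\to 0$ as $\Det A\to 0$ (yielding (d3)). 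Finally (b3) with $c_1,c_2\ge 0$ (forced by $(\Det A)^{c_i}\le 1$) is exactly (d4). The main obstacle is the extension step: verifying that $\tilde\phi$ is well-defined, continuous, and a genuine Jordan triple endomorphism on all of $\fP_2$. Everything there rests on the two commutativity facts above---the $\phi(\epsilon I)$ commute among themselves and each commutes with the range of $\phi$---without which the $\phi(\epsilon I)^{-1/2}$ factors could not be freely rearranged in the critical identity.
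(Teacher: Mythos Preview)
Your strategy is sound and the argument goes through, but it differs from the paper's in the extension step. The paper first shows (via order preservation, using the characterization $A\le B\iff A=B\circ C$) that $\phi$ sends invertible effects to invertible effects, then diagonalizes $\phi(\lambda I)=\diag[\lambda^c,\lambda^d]$ and splits into two cases: if some $\phi(A)$ is not diagonal, commutation forces $c=d$, so $\phi(\lambda I)=\lambda^c I$ is scalar and the extension is simply $\Phi(A)=\|A\|^c\,\phi(A/\|A\|)$; if all $\phi(A)$ are diagonal, the two diagonal entries are extended separately as scalar Jordan triple functionals on $\fP_2$. Your single formula $\tilde\phi(A)=\phi(\epsilon I)^{-1/2}\phi(\epsilon A)\phi(\epsilon I)^{-1/2}$ handles both cases at once, which is cleaner; the price is that you must lean on the two commutativity facts to rearrange the $\phi(\epsilon I)^{\pm1/2}$ factors, whereas the paper never needs such rearrangements because in its non-diagonal case $\phi(\epsilon I)$ is already central.

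Two points deserve a line of justification. First, you need $\tilde\phi$ to land in $\fP_2$, i.e.\ $\phi(\epsilon A)$ must be invertible whenever $\epsilon A$ is an invertible effect; this is not automatic from what you wrote. The quickest fix in your framework: for invertible $B\in\mathbb E_2$ pick $\delta>0$ with $\delta B^{-1/2}\in\mathbb E_2$, note $(\delta B^{-1/2})B(\delta B^{-1/2})=\delta^2 I$, apply the Jordan triple identity and take determinants to get $\det\phi(B)\neq 0$. Second, in the rank-one degenerate case you assert that the scalar functional $\lambda$ is ``necessarily of the form $(\Det A)^c$''. That is true but not immediate; it requires either the characterization from \cite{ML06a} or, more in keeping with your approach, the same extension trick to a continuous Jordan triple functional on $\fP_2$ followed by Theorem~\ref{T:main} (which then forces the determinant-power form). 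The paper sidesteps this by replacing $\phi$ with $A\mapsto\phi(A)+(I-P)$ to reduce the rank-one case to the unital one.
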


\begin{proof}
First observe that every sequential endomorphism $\phi:\E \to \E$ is automatically a Jordan triple map. Indeed,
we clearly have $\phi(A^2)=\phi(A)^2$, $A\in \E$. 
It implies that $\phi(\sqrt{A})=\sqrt{\phi(A)}$, $A\in \E$ and hence it follows that $\phi$ is a Jordan triple map, i.e., $\phi$ satisfies
$\phi(ABA)=\phi(A)\phi(B)\phi(A)$, $A,B\in \E$. Moreover,
we infer that $\phi$ sends projections to projections implying that $\phi(I)$ is a projection. If $\phi(I)=0$, we easily have that $\phi$ is identically zero. If $\phi(I)=P$ is a rank-one projection, then by $\phi(A)=\phi(IAI)=P\phi(A)P$ it follows the map $A\mapsto \phi(A)+(I-P)$, $A\in \E$ is a sequential endomorphism of $\E$ which is unital, i.e., it sends $I$ to $I$. 

Therefore, in what follows we may and do assume that our original transformation $\phi$ is a continuous unital sequential endomorphism (and hence a Jordan triple map).

Consider the function $\lambda \mapsto \Det \phi(\lambda I)$, $\lambda\in [0,1]$. Clearly, this is a continuous multiplicative map of the interval $[0,1]$ into itself which sends 1 to 1. Lemma 3 in  \cite{ML12c} tells us that such a function is either everywhere equal to 1 or it is a power function corresponding to a positive exponent. This means that $\phi(\lambda I)$ is invertible for all $0<\lambda \leq 1$.
We claim that $\phi$ sends invertible elements of $\E$ to invertible elements. To see this, first observe that $\phi$ preserves the usual order $\leq$. Indeed, by \cite[Theorem 5.1]{GudGre02b} we know that for any $A,B\in \E$ we have $A\leq B$ if and only if there is a $C\in \E$ such that $A=B\circ C$. This clearly shows that for any $A,B\in \E$ with $A\leq B$ we have $\phi(A)\leq \phi(B)$.
Now, if $A\in \E$ is invertible, then there is a scalar $0<\lambda\leq 1$ such that $\lambda I\leq A$ holds which implies that $\phi(\lambda I)\leq \phi(A)$. Since $\phi(\lambda I)$ is invertible, it follows that $\phi(A)$ is also invertible.

The sequential endomorphism $\phi$ preserves commutativity. This follows from the fact that for any pair $A,B$ of effects we have $A\circ B=B\circ A$ if and only $A,B$ as matrices commute (see, e.g., Corollary 2.2 in \cite{GudNag01}). It follows that the effects $\phi(\lambda I)$, $\lambda\in [0,1]$ all commute and hence they are jointly diagonizable. This means that up to unitary similarity we can write
\[
\phi(\lambda I)=
\left[ \ba{cc} \varphi(\lambda ) & 0 \\ 0 & \psi(\lambda) \ea\right], \quad \lambda \in[0,1]
\]
where $\varphi,\psi:[0,1] \to [0,1]$ are continuous multiplicative functions which send 1 to 1. Therefore, by \cite[Lemma 3]{ML12c} again, we have real numbers $c,d\geq 0$ such that
\[
\phi(\lambda I)=
\left[ \ba{cc} \lambda^c & 0 \\ 0 & \lambda^d \ea\right], \quad \lambda \in[0,1].
\]

We now distinguish  two cases.
Assume first that there is $\phi(A)$ which is not diagonal. Since $\phi(A)$ necessarily commute with $\phi(\lambda I)$, $\lambda \in [0,1]$, one can easily deduce that we necessarily have $c=d$. It follows that  $\phi(\lambda I)=\lambda^c I$ and hence we have $\phi(\lambda A)=\lambda^c \phi(A)$ for all $\lambda \in[0,1], A\in \E$.

We next define $\Phi:\fP_2 \to \fP_2$ by
\begin{equation}\label{E:M3}
\Phi(A)=\| A\|^c\phi(A/\| A\|), \quad  A\in \E.
\end{equation}
In contrast to the proof of our main result, $\|.\|$ stands here for the operator norm (spectral norm) of matrices; we do hope it does not cause serious confusion. It follows that for any invertible effect $A\in \E$ we have
\[
\Phi(A)=\| A\|^c\phi(A/\| A\|)=\phi(\| A\| (A/\| A\|))=\phi(A).
\]
We assert that $\Phi$ is a Jordan triple endomorphism of $\fP_2$. Indeed, for any $A,B\in \fP_2$ we compute
\[
\begin{gathered}
\Phi(A)\Phi(B)\Phi(A)=\| A\|^{2c}\|B\|\phi\biggl(\frac{A}{\|A\|}\biggr)\phi\biggl(\frac{B}{\|B\|}\biggr)\phi\biggl(\frac{A}{\|A\|}\biggr)\\
=\| A\|^{2c}\|B\|\phi\biggl(\frac{ABA}{\|A\|\|B\|\|A\|}\biggr)=\| A\|^{2c}\|B\|\phi\biggl(\frac{\|ABA\|}{\|A\|\|B\|\|A\|}\frac{ABA}{\|ABA\|}\biggr)\\
=\| A\|^{2c}\|B\|^c \biggl(\frac{\|ABA\|}{\|A\|\|B\|\|A\|}\biggr)^c
\phi\biggl(\frac{ABA}{\|ABA\|}\biggr)
=\|ABA\|^c \phi\biggl(\frac{ABA}{\|ABA\|}\biggr)=\Phi(ABA),
\end{gathered}
\]
where we have used the facts that $\|ABA\|/(\|A\|\|B\|\|A\|)\leq 1$ and that $(ABA)/\|ABA\|$ is an effect.
Clearly, $\Phi$ is continuous and hence Theorem~\ref{T:main} applies and we obtain that $\Phi$ is of one of the forms (b1), (b2). In the case of (b1), we have
that $\phi(A)=(\Det A)^c UAU^*$ holds for all invertible $A\in \E$ with a given unitary matrix $U$ and real number $c$. Since $\phi$ sends effects to effects, it follows easily that $c$ is necessarily non-negative. By continuity we deduce
$$
\phi(A)=(\Det A)^c UAU^*, \quad A\in \E
$$
yielding the possibility (d1).
Consider now the case where $\phi(A)=(\Det A)^d VA^{-1}V^*$ holds for all invertible $A\in \E$ with a given unitary matrix $V$ and real number $d$. Again, since $\phi$ sends effects to effects, one can easily verify that $d\geq 1$. 
If $d=1$, then we have
$$
\phi(A)=V (\adj A) V^*
$$
for all invertible $A\in \E$ and by continuity it follows that the same formula remains valid for any $A\in \E$, too. This gives us (d2). Assume $d>1$. Letting $A$ be an invertible effect tending to some non-invertible one, it follows that $\phi(A)=(\Det A)^d VA^{-1}V^*$ tends to 0. Hence, we obtain
that
$$
\phi(A)=
\left\{
  \begin{array}{ll}
    (\det A)^d VA^{-1}V^*, & \hbox{if $A\in \E$ is invertible;} \\
    0, & \hbox{otherwise}
  \end{array}
\right.
$$
and this is the possibility (d3).

It remains to discuss the case where
all $\phi(A)$ are diagonal, that is when we have 
\[
\phi(A)=
\left[ \ba{cc} \varphi(A) & 0 \\ 0 & \psi(A) \ea\right], \quad A\in \E
\]
for continuous (unital) Jordan triple maps $\varphi, \psi:\E\to [0,1]$. As in \eqref{E:M3}, we can extend $\varphi,\psi$ from the set of all invertible elements of $\E$ to continuous Jordan triple functionals 
$\tilde \varphi,\tilde \psi :\fP_2 \to ]0,\infty [$. Applying Theorem~\ref{T:main}, it follows that $\tilde \varphi,\tilde \psi$ are non-negative powers of the determinant function.
Hence we obtain that
\[
\phi(A)=
\left[ \ba{cc} (\Det A)^c & 0 \\ 0 & (\Det A)^d \ea\right], \quad A\in \E
\]
holds for some non-negative real numbers $c,d$. This gives (d4) and the proof of the theorem is complete.
\end{proof}

We conclude the paper with the following remark.
In \cite[Theorem 1]{ML12c} we considered effects as linear operators and the satement was formulated accordingly. One can notice that the operators $U,V,W$ in \cite{ML12c} were either unitaries or antiunitaries. However, in our present result Theorem~\ref{T:M1} only unitary matrices appear. The reason for this is the following. For an antiunitary $U$, the transformation $A\mapsto UA^* U^*$ is a linear antiautomorphism which hence can be written in the form $A\mapsto U' A^{tr} {U'}^*$ with some unitary $U$. But, as we have already seen, $A^{tr}=(\Det A) U'' A^{-1} {U''}^*$ holds for all $A\in \fP_2$ with some $2\times 2$ unitary matrix $U''$. That means that we have $A^{tr}= U'' (\adj A) {U''}^*$ for all $A\in \E$. One can now readily verify that if any of $U,V,W$ in Theorem~\ref{T:M1} would be an "antiunitary matrix" the corresponding map could still be written in one of the forms (d1)-(d3) with an appropriate unitary matrix.

\end{document}